 \newtheorem{thm}{Theorem}[section]
 \newtheorem{lem}[thm]{Lemma}
 \theoremstyle{definition}
 \theoremstyle{remark}
 \numberwithin{equation}{section}
\newcommand{\R}{{\mathbb R}}
\newcommand{\Sn}{{\mathbb S}^{n-1}}
\newcommand{\Sph}{\mathbb{S}}
\newcommand{\Kn}{{\mathcal K}^n}
\newcommand{\N}{{\mathbb N}}
\newcommand{\Ha}{{\mathcal H}}
\newcommand{\D}{{\rm d}}
\begin{document}

%
%
%
%
%
%
%
%
%

\title[H\"older continuity for support measures]
 {H\"older continuity for support measures\\ of convex bodies}

\author[Daniel Hug]{Daniel Hug}

\address{%
Karlsruhe Institute of Technology\\ 
Department of Mathematics\\
D-76128 Karls\-ruhe\\ 
Germany}

\email{daniel.hug@kit.edu}

\thanks{The authors acknowledge support by the German research foundation (DFG) 
through the research group `Geometry and Physics of Spatial Random Systems' under  grant HU1874/2-1.}
\author{Rolf Schneider}
\address{Mathematisches Institut\\ 
Albert-Ludwigs-Universit{\"a}t\\
D-79104 Freiburg i. Br.\\ 
Germany}
\email{rolf.schneider@math.uni-freiburg.de}

\subjclass{Primary 52A20; Secondary 52A22}

\keywords{Support measure, curvature measure, area measure, weak convergence, H\"older continuity, stability}

\date{October 10, 2014}
\dedicatory{}

\begin{abstract}
The support measures of a convex body are a common generalization of the curvature measures and the area measures. With respect to the Hausdorff metric on the space of convex bodies, they are weakly continuous. We provide a quantitative improvement of this result, by establishing a H\"older estimate for the support measures in terms of the bounded  Lipschitz metric, which metrizes the weak convergence. Specializing the result to area measures yields a reverse counterpart to earlier stability estimates, concerning Minkowski's existence theorem for convex bodies with given area measure.\end{abstract}

\maketitle

\section{Introduction}\label{sec1}

In the theory of convex bodies in Euclidean space, the curvature functions (elementary symmetric functions of principal curvatures or radii of curvature), known from the differential geometry of hypersurfaces, have been replaced by curvature measures and area measures. Their common generalization, the support measures, take into account that a boundary point of a convex body and an outer unit normal vector at this point in general do not determine each other uniquely. The support measures of a convex body in Euclidean space $\R^n$ are Borel measures on the unit sphere bundle of $\R^n$, with the property that their marginal measures are the curvature measures on $\R^n$ and the area measures on the unit sphere $\Sn$. On the space of convex bodies with the Hausdorff metric, the support measures are weakly continuous. In the present note, we improve this statement by showing that the support measures are locally H\"older continuous with respect to the bounded Lipschitz metric.

We denote by $\Kn$ the space of convex bodies (nonempty compact convex subsets) in Euclidean space $\R^n$, as usual equipped with the Hausdorff metric $d_H$. We write $B^n$ for the unit ball in $\R^n$. For $\rho\ge0$, $K_\rho:=K+\rho B^n$ is the parallel body of the convex body $K$ at distance $\rho$. Let $\Lambda_i(K,\cdot)$ denote the $i$th support measure of $K\in\Kn$. Its definition, as well as the definition of the bounded Lipschitz metric $d_{bL}$, will be recalled in Section \ref{sec2}.

\begin{thm}\label{T1}
Let $K,L\in\Kn$ be convex bodies, and let $R$ be the radius of a ball containing $K_2$ and $L_2$. Then
\begin{equation}\label{eqT1} 
d_{bL}(\Lambda_i(K,\cdot), \Lambda_i(L,\cdot)) \le C(R)\, d_H(K,L)^{1/2}
\end{equation}
for $i\in\{0,\dots,n-1\}$, where $C(R)$ is a constant which (for given dimension) depends only on $R$.
\end{thm}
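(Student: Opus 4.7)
The plan is to reduce the theorem to a quantitative comparison of the local parallel volumes
\[
T_\rho(K,f):=\int_{K_\rho\setminus K} f\bigl(p_K(x),u_K(x)\bigr)\,\D x,
\]
where $p_K$ and $u_K$ denote the metric projection onto $K$ and the associated outer unit normal, and $f$ is a bounded Lipschitz function on $\R^n\times\Sn$ with $\|f\|_\infty+\mathrm{Lip}(f)\le 1$. The local Steiner formula expresses $T_\rho(K,f)$ as a polynomial in $\rho$ whose coefficients are (up to dimensional constants) the integrals $\int f\,\D\Lambda_j(K,\cdot)$ for $j=0,\dots,n-1$. Evaluating at $n$ fixed values $0<\rho_1<\dots<\rho_n\le 2$ and inverting the resulting Vandermonde system recovers each $\int f\,\D\Lambda_j(K,\cdot)$ as a linear combination of $T_{\rho_k}(K,f)$ with dimension-dependent coefficients, so it suffices to bound $|T_\rho(K,f)-T_\rho(L,f)|$ by $C(R)\,d_H(K,L)^{1/2}$ uniformly in $\rho\in[\rho_1,\rho_n]$.

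Write $\delta:=d_H(K,L)$ and split $T_\rho(K,f)-T_\rho(L,f)$ into an integral over the symmetric difference $(K_\rho\setminus K)\triangle(L_\rho\setminus L)$ and an integral of $f(p_K,u_K)-f(p_L,u_L)$ over the common part. The symmetric difference lies in a $\delta$-neighbourhood of the boundaries of $K$, $L$, $K_\rho$, $L_\rho$, all contained in the ball $B^n(R)$; standard surface-area estimates therefore bound its volume, and hence its contribution to $T_\rho(K,f)-T_\rho(L,f)$, by $c(R)\delta$. For the common part, the Lipschitz assumption on $f$ (with the natural product metric on $\R^n\times\Sn$) reduces matters to bounding
\[
\int_{K_\rho\setminus K}\bigl(|p_K(x)-p_L(x)|+|u_K(x)-u_L(x)|\bigr)\,\D x.
\]

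The two pointwise estimates we need are the classical projection inequality
\[
|p_K(x)-p_L(x)|\le\sqrt{2\delta\,d(x,K)}+\delta,
\]
proved by comparing $|x-p_K(x)|^2$ with $|x-q'|^2$ for a point $q'\in K$ within distance $\delta$ of $p_L(x)$, and, via the identity $u_K-u_L=(p_L-p_K)/d(x,K)+u_L(d(x,L)-d(x,K))/d(x,K)$ together with $|d(x,L)-d(x,K)|\le\delta$, the resulting bound
\[
|u_K(x)-u_L(x)|\le 2\wedge\left(\sqrt{\tfrac{2\delta}{d(x,K)}}+\tfrac{2\delta}{d(x,K)}\right).
\]
Integrating the first estimate over $K_\rho\setminus K$ (using $\int\sqrt{d(x,K)}\,\D x\le c(R)$) yields a contribution of order $\sqrt\delta$. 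For the normals, introduce a cutoff $\varepsilon>0$: on $\{d(x,K)<\varepsilon\}$, of volume at most $c(R)\varepsilon$, use the trivial bound $|u_K-u_L|\le 2$; on $\{\varepsilon\le d(x,K)\le\rho\}$, use the quantitative bound, noting that the coarea formula together with Steiner-type bounds for the perimeter $\sigma_K(t)$ of the level sets of $d_K$ give $\int_\varepsilon^\rho t^{-1/2}\sigma_K(t)\,dt\le c(R)$ and $\int_\varepsilon^\rho t^{-1}\sigma_K(t)\,dt\le c(R)(1+\log(\rho/\varepsilon))$. Choosing $\varepsilon=\sqrt\delta$ balances these contributions, producing a total estimate of order $\sqrt\delta$ (the logarithmic term being absorbed since $\delta\log(1/\delta)=O(\sqrt\delta)$ as $\delta\to 0$).

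The main obstacle is the genuine discontinuity of the normal map $u_K$ at edges and corners of $K$: no uniform estimate on $|u_K-u_L|$ can hold, so the $\varepsilon$-cutoff is unavoidable. The exponent $1/2$ in the theorem then emerges naturally as the optimal trade-off between the crude bound on the tube of width $\varepsilon$ and the $\sqrt{\delta/d(x,K)}$-type bound outside it, reinforced by the square root already present in the projection inequality.
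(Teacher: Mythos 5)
Your proposal is correct and follows the same architecture as the paper's proof: reduce to the local parallel-volume functionals via the Steiner polynomial and a Vandermonde inversion at $n$ fixed radii, split the difference into the symmetric difference of the annular regions plus the common part, bound $\Ha^n$ of the symmetric difference by $c(R)\delta$, use a $\sqrt{\delta}$-type pointwise bound for $|p_K-p_L|$, and handle the discontinuity of the normal map by a cutoff near the boundary. The one place where you genuinely diverge is the pointwise estimate for $|u_K-u_L|$: you derive it algebraically from the projection estimate via the identity $u_K-u_L=(p_L-p_K)/d_K+u_L(d_L-d_K)/d_K$, which produces an extra term of order $\delta/d(x,K)$ and forces you to absorb a factor $\delta\log(1/\delta)$ after cutting off at $\varepsilon=\sqrt{\delta}$. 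The paper instead argues geometrically: at a point $y=x+tu$ with $(x,u)\in\Nor K$ and $t>\delta$, the foot point $p(L,y)$ lies in $B(y,t+\delta)\cap(H^-+\delta u)$ for the supporting halfspace $H^-$ of $K$ at $x$, which gives $|u_K(y)-u_L(y)|\le 2\sqrt{\delta}/\sqrt{t+\delta}$ directly; combined with the cutoff at $\gamma=\min\{\delta,\rho\}$ and the disintegration of $\Ha^n$ over $K^\rho\setminus K^\gamma$ (your coarea step), this avoids the logarithm entirely. Both routes yield the stated $C(R)\sqrt{\delta}$, so your variant is a legitimate, marginally less sharp way to close the same argument.
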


We shall obtain Theorem \ref{T1} by adapting an approach due to Chazal, Cohen--Steiner and M\'{e}rigot \cite{CCM10}. These authors have obtained similar estimates for local parallel volumes of compact sets and have deduced estimates for curvature measures of sets of positive reach. Our restriction to convex bodies, which are the natural sets for the consideration of support measures, allows a simpler approach which, for this restricted class of sets, yields a more general result. 

In Section \ref{sec4} we show that the H\"older exponent $1/2$ in the estimate (\ref{eqT1}) is best possible.  

A special case of Theorem \ref{T1} concerns the area measure $S_{n-1}(K,\cdot)$. If $\omega\subset\Sn$ is a Borel set, then $S_{n-1}(K,\omega)=2\Lambda_{n-1}(K,\R^n\times\omega)$. From Theorem \ref{T1} it follows under the same assumptions on $K$ and $L$ that
\begin{equation}\label{4.14} 
d_{bL}(S_{n-1}(K,\cdot),S_{n-1}(L,\cdot))\le C'(R)\,d_H(K,L)^{1/2}.
\end{equation}
We want to present some motivation for proving such an inequality.

The area measure is the subject of a famous existence and uniqueness theorem due to Minkowski (see, e.g., \cite[Sec.~8.2]{Sch14}). The uniqueness assertion has been improved by some stability results. One of these (going back to Diskant; see \cite{Diskant72}, \cite[Thm.~8.5.1]{Sch14}) says that for convex bodies $K,L\in \Kn$ one has
\begin{equation}\label{4.15} 
d_H(K,L') \le \gamma \,\|S_{n-1}(K,\cdot)-S_{n-1}(L,\cdot)\|_{\rm TV}^{1/n}
\end{equation}
for a suitable translate $L'$ of $L$, where $\|\cdot\|_{\rm TV}$ denotes the total variation norm. Here $\gamma>0$ is a constant depending only on the dimension and on  a-priori bounds for the inradius and circumradius of $K$ and $L$. 

The stability result (\ref{4.15}) has the flaw that the left side can be small even if the right side is large. For example, a unit cube $K$ and a rotated image $L$ of $K$ can have arbitrarily small Hausdorff distance and still satisfy $\|S_{n-1}(K,\cdot)-S_{n-1}(L,\cdot)\|_{\rm TV}\ge 1$. It seems, therefore, more meaningful to replace the right-hand side in (\ref{4.15}) by an expression involving a metric for measures that metrizes the weak convergence. For the L\'{e}vy--Prokhorov metric, such a stability result was proved in \cite{HS02}; see also \cite[Thm.~8.5.3]{Sch14}. It was deduced from a corresponding stability result for the bounded Lipschitz metric (which is implicit in the proof, though it was not stated explicitly), namely
\begin{equation}\label{4.16} 
d_H(K,L') \le \gamma\,d_{bL}(S_{n-1}(K,\cdot),S_{n-1}(L,\cdot))^{1/n}
\end{equation}
for a suitable translate $L'$ of $L$, with a constant $\gamma$ as above. It appears that the H\"older continuity (\ref{4.14}) is, in principle, a more elementary fact than its reverse, the stability estimate (\ref{4.16}), and should therefore have preceded it.

\section{Notation and preliminaries}\label{sec2}

We recall some notions and notation used in the following. The Hausdorff distance of two convex bodies $K,L\in\Kn$ is given by
$$ d_H(K,L) = \min\{\rho\ge 0: K\subset L_\rho,\,L\subset K_\rho\}.$$
Let $K\in\Kn$. The metric projection $p(K,\cdot):\R^n\to K$ is defined by letting $p(K,x)$, for $x\in\R^n$, be the unique point in $K$ for which $|p(K,x)-x|\le|y-x|$ for all $y\in K$, where $|\cdot|$ denotes the Euclidean norm. Further, $d(K,x)= |x-p(K,x)|$ is the distance of the point $x$ from $K$, and for $x\in \R^n\setminus K$, the vector $u(K,x)= (x-p(K,x))/d(K,x)$ is the unit vector pointing from $p(K,x)$ to $x$. In the following, we write $p(K,\cdot)= :p_K$, $u(K,\cdot)=:u_K$, and $d(K,\cdot)=:d_K$. For $\rho>0$, we set $K^\rho:=K_\rho\setminus K$, where $K_\rho=K+\rho B^n$ is the already defined parallel body of $K$ at distance $\rho$. The product space $\R^n\times \Sn$, with its standard Euclidean metric as a subspace of $\R^n\times\R^n$, is denoted by $\Sigma^n$. For $\eta\subset \Sigma^n$, we consider the local parallel set
$$ M_\rho(K,\eta):= \{x\in K^\rho: (p_K(x),u_K(x))\in\eta\}$$
and define 
\begin{equation}\label{2.1}
\mu_{K,\rho}(\eta):= \Ha^n(M_\rho(K,\eta)),
\end{equation}
where $\Ha^n$ denotes the $n$-dimensional Hausdorff measure. If $\eta$ is a Borel set, then $M_\rho(K,\eta)$ is a Borel set, and there is a polynomial expansion
\begin{equation}\label{4.9}  
\mu_{K,\rho}(\eta) =\sum_{i=0}^{n-1} \rho^{n-i}\kappa_{n-i}\Lambda_i(K,\eta)\qquad\mbox{for }\rho\ge 0,
\end{equation}
where the normalizing factor $\kappa_{j}$ is the $j$-dimensional volume of $B^{j}$; see \cite{Sch14}, formulas (4.4) and (4.18). This defines the {\em support measures} $\Lambda_0(K,\cdot), \dots, \Lambda_{n-1}(K,\cdot)$ of $K$. 
They are finite Borel measures on $\Sigma^n$. The measure $\Lambda_i(K,\cdot)$ is concentrated on ${\rm Nor}\,K$, the {\em normal bundle} of $K$. By definition, this is the subspace of $\Sigma^n$, with the induced topology, consisting of all pairs $(x,u)$ where $x$ is a boundary point of $K$ and $u$ is an outer unit normal vector of $K$ at $x$. 

The support measures have the property of weak continuity: if a sequence $(K_j)_{j\in\N}$ of convex bodies converges to a convex body $K$ in the Hausdorff metric, then the sequence $(\Lambda_i(K_j,\cdot))_{j\in\N}$ converges weakly to $\Lambda_i(K,\cdot)$. The topology of weak convergence can be metrized by the L\'{e}vy--Prokhorov metric $d_{LP}$ or by the bounded Lipschitz metric $d_{bL}$ (see, e.g., Dudley \cite[Sec.~11.3]{Dud02}). To define the latter, for bounded real functions $f$ on $\Sigma^n$ let
$$ \|f\|_L:=\sup_{a\not= b}\frac{|f(a)-f(b)|}{|a-b|},\qquad \|f\|_\infty:=\sup_a |f(a)|.$$
For finite Borel measures $\mu,\nu$ on $\Sigma^n$, their {\em bounded Lipschitz distance} is then defined by
$$ d_{bL}(\mu,\nu):= \sup\left\{ \left|\int_{\Sigma^n} f\,\D\mu- \int_{\Sigma^n} f\,\D\nu\right|:f\in \mathcal{F}_{bL}\right\},$$
where $\mathcal{F}_{bL}$ is the set of all functions $f:\Sigma^n\to\R$ with $ \|f\|_L\le 1$ and $\|f\|_\infty\le 1$.

\vspace{2mm}

\section{Proof of Theorem \ref{T1}}\label{sec3}

The following lemma is modeled after Proposition 4.1 of Chazal, Cohen--Steiner and M\'{e}rigot \cite{CCM10}. Under the restriction to convex bodies, it extends the latter to the measures $\mu_{K,\rho}$ defined by (\ref{2.1}).

\begin{lem}\label{L4.1}
If $K,L\in\Kn$ are convex bodies and $\rho>0$, then
\begin{align*}
d_{bL}(\mu_{K,\rho},\mu_{L,\rho}) &\le \int_{K^\rho\cap L^\rho} |p_K-p_L|\,\D\Ha^n+ \int_{K^\rho\cap L^\rho} |u_K-u_L|\,\D\Ha^n\\
&\qquad +\Ha^n(K^\rho\triangle L^\rho),
\end{align*}
where $\triangle$ denotes the symmetric difference.
\end{lem}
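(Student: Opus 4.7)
The plan is to unfold the definition of $d_{bL}$ and estimate things directly via the $\mathcal{H}^n$-representation of $\mu_{K,\rho}$ from (\ref{2.1}). Fix $f \in \mathcal{F}_{bL}$, so that $\|f\|_L \le 1$ and $\|f\|_\infty \le 1$. By (\ref{2.1}), together with the fact that $M_\rho(K,\Sigma^n) = K^\rho$, the integral of $f$ against $\mu_{K,\rho}$ can be written as
\[
\int_{\Sigma^n} f\,\D\mu_{K,\rho} = \int_{K^\rho} f(p_K(x),u_K(x))\,\D\Ha^n(x),
\]
and analogously for $L$. The key move is to split the two domains of integration using the partitions $K^\rho = (K^\rho\cap L^\rho)\cup (K^\rho\setminus L^\rho)$ and $L^\rho = (K^\rho\cap L^\rho)\cup (L^\rho\setminus K^\rho)$.

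After subtracting, the difference decomposes into three pieces: an integral over the common set $K^\rho\cap L^\rho$ of the pointwise difference $f(p_K,u_K)-f(p_L,u_L)$, plus two ``excess'' integrals over $K^\rho\setminus L^\rho$ and $L^\rho\setminus K^\rho$. On $K^\rho\cap L^\rho$ both pairs $(p_K,u_K)$ and $(p_L,u_L)$ are defined (since $K^\rho\subset\R^n\setminus K$ and likewise for $L$), and the Euclidean metric on $\Sigma^n\subset\R^n\times\R^n$ satisfies
\[
|(p_K,u_K)-(p_L,u_L)| = \sqrt{|p_K-p_L|^2+|u_K-u_L|^2} \le |p_K-p_L|+|u_K-u_L|.
\]
Using $\|f\|_L\le 1$, the common-part integral is therefore bounded by the sum of the first two integrals on the right-hand side of the claim.

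For the two excess integrals, the bound $\|f\|_\infty\le 1$ gives
\[
\left|\int_{K^\rho\setminus L^\rho} f(p_K,u_K)\,\D\Ha^n - \int_{L^\rho\setminus K^\rho} f(p_L,u_L)\,\D\Ha^n\right| \le \Ha^n(K^\rho\setminus L^\rho)+\Ha^n(L^\rho\setminus K^\rho) = \Ha^n(K^\rho\triangle L^\rho).
\]
Combining the three estimates by the triangle inequality and then taking the supremum over $f\in\mathcal{F}_{bL}$ yields the asserted inequality.

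There is essentially no serious obstacle here, as the argument is a clean book-keeping split; the only points that need a brief verification are that $M_\rho(K,\eta)$ is $\Ha^n$-measurable (so the integrals make sense) and that $p_K,u_K$ are well defined on $K^\rho$, both of which follow from standard properties of the metric projection onto a convex body. The choice of estimate $|a-b|\le|a_1-b_1|+|a_2-b_2|$ for the product Euclidean metric on $\Sigma^n$ is what produces the sum of the two mixed integrals rather than a single joint integral, matching the form of the stated inequality.
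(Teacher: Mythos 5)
Your proposal is correct and follows essentially the same route as the paper's proof: interpret $\mu_{K,\rho}$ as the image of $\Ha^n\llcorner K^\rho$ under $x\mapsto(p_K(x),u_K(x))$, split the domains into $K^\rho\cap L^\rho$ and the two set differences, bound the common part via $\|f\|_L\le 1$ together with $|(p_K,u_K)-(p_L,u_L)|\le|p_K-p_L|+|u_K-u_L|$, and bound the excess parts via $\|f\|_\infty\le 1$. No gaps.
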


\begin{proof}
For $K\in\Kn$ and $\rho>0$, let $F_\rho:K^\rho\to\Sigma^n$ be defined by $F_\rho(x):= (p_K(x),u_K(x))$ for $x\in K^\rho$. Then  $F_\rho$ is continuous, and $\mu_{K,\rho}$ is the image measure of $\Ha^n$, restricted to the Borel subsets of $K^\rho$, under $F_\rho$. 

Let $f:\Sigma^n\to\R$ be a function with $\|f\|_L\le 1$ and $\|f\|_\infty\le 1$. Applying  the transformation formula for integrals to $F_\rho$ and  using the properties of $f$, we obtain, for $K,L\in\Kn$,
\begin{align*}
&  \left|\int_{\Sigma^n} f\,\D\mu_{K,\rho} -\int_{\Sigma^n} f\,\D\mu_{L,\rho} \right|\\
&\qquad  = \left|\int_{K^\rho} f\circ(p_K,u_K)\,\D\Ha^n -\int_{L^\rho} f\circ(p_L,u_L)\,\D\Ha^n\right|\\
& \qquad \le \int_{K^\rho\cap L^\rho} \left|f\circ(p_K,u_K)-f\circ(p_L,u_L)\right|\,\D\Ha^n\\
&  \qquad\qquad+\int_{K^\rho\setminus L^\rho}  \left|f\circ(p_K,u_K)\right|\,\D\Ha^n +\int_{L^\rho\setminus K^\rho}  \left|f\circ(p_L,u_L)\right|\,\D\Ha^n \\
& \qquad \le \int_{K^\rho\cap L^\rho} |(p_K,u_K)-(p_L,u_L)|\,\D\Ha^n + \int_{K^\rho\setminus L^\rho}  1\,\D\Ha^n +\int_{L^\rho\setminus K^\rho} 1\,\D\Ha^n \\
& \qquad \le \int_{K^\rho\cap L^\rho} (|p_K-p_L|+|u_K-u_L|)\,\D\Ha^n +\Ha^n(K^\rho\triangle L^\rho),
\end{align*}
from which the assertion follows.
\end{proof}

\noindent{\em Proof of Theorem} \ref{T1}.
Let $K,L\in\Kn$, and set $d_H(K,L)=:\delta$. Let $R$ be the radius of a ball containing $K_2$ ($=K+2B^n$) and $L_2$.
We assume that $\delta<1$; this is not a loss of generality, since the left side of (\ref{eqT1}) is bounded by a constant depending only on $R$. Let $0<\rho\le 1$. We use Lemma \ref{L4.1} and estimate the terms on the right-hand side. First, from Lemma 1.8.11 in \cite{Sch14} we get
\begin{equation}\label{4.10} 
\int_{K^\rho\cap L^\rho} |p_K-p_L|\,\D\Ha^n\le\sqrt{5D}\,\Ha^n(K^\rho\cap L^\rho)\sqrt{\delta} \le C_1(R)\sqrt{\delta},
\end{equation}
where $D={\rm diam}(K_\rho\cup L_\rho)$ and the constant $C_1(R)$ depends only on $R$.

Writing $g(x):= |u_K(x)-u_L(x)|$ if $x\in K^\rho\cap L^\rho$ and $g(x)=0$ otherwise, we have
\begin{equation}\label{4.51} 
\int_{K^\rho\cap L^\rho} |u_K-u_L|\,\D\Ha^n = \int_{K^\rho} g\,\D\Ha^n =\int_{K^\gamma} g\,\D\Ha^n + \int_{K^\rho\setminus K^\gamma} g\,\D\Ha^n
\end{equation}
with $\gamma:=\min\{\delta,\rho\}$. Clearly,
$$ \int_{K^\gamma} g\,\D\Ha^n \le C_2(R)\delta\le C_2(R)\sqrt{\delta}.$$
If $\delta\ge\rho$, the last integral in (\ref{4.51}) is zero. Let $\delta<\rho$. Using \cite{Sch14}, formula (4.38), we have
\begin{equation}\label{3.3} 
\int_{K^\rho\setminus K^\gamma} g\,\D\Ha^n = \sum_{j=0}^{n-1} \omega_{n-j} \int_\delta^\rho t^{n-j-1} \int_{\Sigma^n} g(x+tu)\,\Lambda_j(K,\D(x,u))\,\D t,
\end{equation}
where $\omega_k=k\kappa_k$. Let $(x,u)\in{\rm Nor}\,K$ and set $y:= x+tu$, with $\delta<t\le\rho$. Then
\begin{equation}\label{3.2} 
g(x+tu) \le 2\sin\alpha,
\end{equation}
where $2\alpha$ is the angle between $u_K(y)$ and $u_L(y)$. In fact, (\ref{3.2}) holds with equality if $y\in L^\rho$, and $g(y)=0$ if $y\notin L^\rho$.

Since $d_H(K,L)\le \delta$, the ball $B(x,\delta)=\{z\in\R^n:|z-x|\le \delta\}$ contains a point of $L$. Therefore $d(L,y)\le t+\delta$ and hence $p(L,y)\in B(y,t+\delta)$. Let $H^-$ be the supporting halfspace of $K$ with outer normal vector $u$. Then $L\subset H^-+\delta u$, hence $p(L,y)\subset B(y,t+\delta)\cap(H^-+\delta u)$. The largest possible angle between $u(K,y)$ and $u(L,y)$ is attained if $p(L,y)\in {\rm bd}\, B(y,t+\delta)\cap {\rm bd}\,(H^-+\delta u)$. This gives
$$ \sin\alpha\le\frac{\sqrt{\delta}}{\sqrt{t+\delta}}.$$
Since $t\le\rho\le 1$ in (\ref{3.3}), we conclude that
$$ \int_{K^\rho\setminus K^\gamma} g\,\D\Ha^n \le 2\sqrt{\delta}\sum_{j=0}^{n-1} \omega_{n-j} \int_\delta^\rho \frac{1}{\sqrt{t+\delta}}\,\D t\cdot \Lambda_j(K,\Sigma^n) \le C_3(R)\sqrt{\delta}.$$
Altogether we get
\begin{equation}\label{4.11} 
\int_{K^\rho\cap L^\rho} |u_K-u_L|\,\D\Ha^n\le C_4(R)\sqrt{\delta}.
\end{equation}

\vspace{2mm}

For the estimation of $\Ha^n(K^\rho\triangle L^\rho)$, let $x\in K^\rho\setminus L^\rho$; then $x\in K_\rho\setminus K$ and $x\notin L_\rho\setminus L$. If $x\in L$, then $d(K,x)\le\delta$, hence $x\in K_\delta\setminus K$. If $x\notin L$, then $x\notin L_\rho$ but $x\in K_\rho$, $K_\rho\subset(L_\delta)_\rho=L_{\rho+\delta}$, and hence $x\in L_{\rho+\delta}\setminus L_\rho$. It follows that
$$ K^\rho \setminus L^\rho \subset (K_\delta\setminus K) \cup (L_{\rho+\delta}\setminus L_\rho)$$
and hence 
\begin{align*}
\Ha^n(K^\rho\setminus L^\rho) &\le  \Ha^n(K_\delta)-\Ha^n(K) + \Ha^n(L_{\rho+\delta}) -\Ha^n(L_\rho)\\
&\le C_5(R)\delta\le C_5(R)\sqrt{\delta}.
\end{align*}
Here $K$ and $L$ can be interchanged, and together with (\ref{4.10}), (\ref{4.11}) and Lemma \ref{L4.1} this gives
\begin{equation}\label{4.12}
d_{bL}(\mu_{K,\rho},\mu_{L,\rho}) \le  C_6(R)\sqrt{\delta}.
\end{equation}

To deduce an estimate for the support measures, we apply the usual procedure (e.g., \cite{Sch14}, p. 213) and choose in (\ref{4.9}) for $\rho$ each of the $n$ fixed values $\rho_j=j/n$, $j=1,\dots,n$, and solve the resulting system of linear equations (which has a non-zero Vandermonde determinant), to obtain representations
$$ \Lambda_i(K,\cdot) = \sum_{j=1}^n a_{ij}\mu_{K,\rho_j},\qquad i=0,\dots,n-1,$$
with constants $a_{ij}$ depending only on $i,j$. Using the definition of the bounded Lipschitz metric, we deduce that
\begin{equation}\label{4.13} 
d_{bL}(\Lambda_i(K,\cdot),\Lambda_i(L,\cdot)) \le \sum_{j=1}^n|a_{ij}|d_{bL}(\mu_{K,\rho_j},\mu_{L,\rho_j}) \le C(R)\sqrt{\delta}.
\end{equation}
This completes the proof of Theorem \ref{T1}. \qed

\section{Optimality}\label{sec4}

The aim of this section is to show that the H\"older estimate of Theorem \ref{T1} is generally best possible, that is, the exponent $1/2$ in (\ref{eqT1}) cannot be replaced by a larger constant. Let $i\in \{1,\ldots,n-1\}$ and recall from \cite[(4.11), (4.18)]{Sch14} that 
$$
S_{i}(K,\cdot)= \frac{i\kappa_{n-i}}{\binom{n}{i}}\Lambda_{i}(K,\R^n\times\cdot)
$$ 
is the $i$-th area measure of $K$. For convenience, we use $\Psi_i(K,\cdot)=\Lambda_{i}(K,\R^n\times\cdot)$ in the following. Let $E$ be a fixed $(i+1)$-dimensional linear subspace of $\R^n$, let $B_E:=B^n\cap E$ be the unit ball and $\mathbb{S}_E:=\mathbb{S}^{n-1}\cap E$ the unit sphere in $E$. For $e\in \Sph_E$, $h\in (0,\pi/2)$ and $\tau\in\R$, 
let $H^-(e,\tau):=\{z\in\R^n: z\cdot e \le \tau\}$ 
and
$$
B_E(e,h):=B_E\cap H^-(e,\cos h).
$$

We assume first that $i\le n-2$. For $s\in [0,\pi/2]$, $t\in [0,h]$, $v\in \Sph_E\cap e^\perp$ and $w\in E^\perp\cap \Sn$, we define  $\varphi(s,t,v,w)\in\Sn$ by
$$
\varphi(s,t,v,w):=(\cos s\cos t) e+(\cos s\sin t) v+(\sin s) w. 
$$
We consider the function  $f:\Sn\to [0,\infty)$ given by
$$
f\left(\varphi(s,t,v,w)\right):=\cos s\left(1-\frac{\sin t}{\sin h}\right)
$$
if $(s,t,v,w)\in [0,\pi/2]\times [0,h]\times (\Sph_E\cap e^\perp) \times \Sph_{ E^\perp}$, and by $0$ otherwise. If $y=\varphi(s,t,v,w)$ and if $\pi_E(y)$ denotes the orthogonal projection of $y$ to $E$, then
$$
f(y)= \cos s -\frac{\cos s\sin t}{\sin h} =|\pi_E(y)|-\frac{|\pi_E(y)-(y\cdot e) e|}{\sin h}.
$$
This shows that the function $f$ is well-defined. 
Together with the fact that $f$ is zero on the boundary of its support, it also shows that there is a constant $c_1>0$ such that $\|f\|_L\le c_1/h$. Here and in the following, all constants can be chosen independently of $h$. Clearly, we have $\|f\|_\infty=1$. 

For $x\in\Sph_E$, we define $\nu(x)\subset\Sn$ by
$$
\nu(x):=\left\{(\cos s) x+(\sin s) w: w\in \Sph_{E^\perp},\, s\in [0,\pi/2]\right\}.
$$

By basic properties of area measures, we have
$$
\Psi_i(B_E,\cdot)=\frac{1}{\omega_{n-i}}\int_{\Sph^i}\int_{\nu(x)}\mathbf{1}\{u\in\cdot\}\, \mathcal{H}^{n-i-1}(\D u)\,\mathcal{H}^i(\D x)
$$
(a special case of \cite{GKW11}, Thm. 6.2) and
$$
\Psi_i(B_E(e,h),\omega)=\frac{\kappa_i}{\omega_{n-i}}\sin^i h\int_{\nu(e)}\mathbf{1}\{u\in\omega\}\, \mathcal{H}^{n-i-1}(\D u),
$$
if $\omega$ is a Borel set contained in the support of $f$. 

Integrating the function $f$ with these two measures, we get
\begin{eqnarray*}
& & \int_{\Sn} f(u)\,\Psi_i(B_E(e,h),\D u)\\
& & = \frac{\kappa_i \sin^i h}{\omega_{n-i}} \int_{\nu(e)} f(u)\,\Ha^{n-i-1}(\D u)\\
& & = \frac{\kappa_i \sin^i h}{\omega_{n-i}} \int_{\Sph_{E^\perp}} \int_0^{\pi/2} f((\cos s)e+(\sin s)w) \sin^{n-i-2}s\,\D s\, \Ha^{n-i-2}(\D w)\\
& & = \frac{\kappa_i \sin^i h}{\omega_{n-i}} \int_{\Sph_{E^\perp}} \int_0^{\pi/2} \cos s \sin^{n-i-2} s\,\D s\,\Ha^{n-i-2}(\D w) \\
& & =  \frac{\kappa_i \sin^i h}{\omega_{n-i}}\, \omega_{n-i-1}\int_0^{\pi/2}\cos s\sin^{n-i-2} s \,\D s\\
& & =  \frac{\kappa_i \kappa_{n-i-1}}{\omega_{n-i}}\sin^i h
\end{eqnarray*}
and
\begin{eqnarray*}
& & \int_{\Sn} f(u)\,\Psi_i(B_E,\D u)\\
& & = \frac{1}{\omega_{n-i}} \int_{\Sph_E} \int_{\nu(x)} f(u)\,\Ha^{n-i-1}(\D u)\,\Ha^i(\D x)\\
& & = \frac{1}{\omega_{n-i}} \int_{\Sph_E\cap e^\perp} \int_0^h\int_{\Sph_{E^\perp}} \int_0^{\pi/2} f(\varphi(s,t,v,w)) \sin^{i-1}t \sin^{n-i-2} s\\
& & \hspace{4mm} \times\enspace\D s\,\Ha^{n-i-2}(\D w)\,\D t \,\Ha^{i-1}(\D v)\\
& & = \frac{1}{\omega_{n-i}} \int_{\Sph_E\cap e^\perp} \int_0^h\int_{\Sph_{E^\perp}} \int_0^{\pi/2} \cos s\left (1-\frac{\sin t} {\sin h}\right)\sin^{i-1}t \sin^{n-i-2} s \\
& & \hspace{4mm} \times\enspace \D s\,\Ha^{n-i-2}(\D w)\,\D t \,\Ha^{i-1}(\D v)\allowdisplaybreaks\\
& & =\frac{\omega_i \omega_{n-i-1}}{\omega_{n-i}} \int_0^h \left(1-\frac{\sin t} {\sin h}\right)\sin^{i-1}t \,\D t
\int_0^{\pi/2}\cos s\sin^{n-i-2}s\,\D s\\
& & =\frac{\omega_i \kappa_{n-i-1}}{\omega_{n-i}}\left[\int_0^h  \sin^{i-1}t\,\D t -\frac{1}{\sin h}  \int_0^h  \sin^{i}t\,\D t \right].
\end{eqnarray*}
To estimate the last integrals, we observe that $0\le t-\sin t\le t^3/6$ for $t\in (0,\pi/2)$. Hence, by the mean value theorem,
$$
0\le t^k-\sin^k t\le \frac{k}{6}t^{k+2},\quad k\in\N_0, t\in (0,\pi/2).
$$
For $k\ge 1$ this yields 
$$
\frac{1}{k}h^{k}-\frac{k-1}{6(k+2)}h^{k+2}\le \int_0^h \sin^{k-1} t\, \D t\le \frac{1}{k}h^{k}.
$$
Moreover, since $0<h\le\sqrt{5}$, we also have 
$$
\frac{1}{h}\le\frac{1}{\sin h}\le \frac{1}{h-h^3/6}\le \frac{1}{h}(1+h^2).
$$
For $h$ tending to zero, we deduce that
$$ \int_{\Sn} f(u)\,\Psi_i(B_E,\D u)= \left[\frac{\kappa_i \kappa_{n-i-1}}{\omega_{n-i}}  - \frac{\omega_i \kappa_{n-i-1}}{(i+1)\omega_{n-i}}\right] h^i +O(h^{i+2})$$
and hence that
\begin{eqnarray*}
&& \int_{\Sn} f(u)\,\Psi_i(B_E(e,h),\D u)- \int_{\Sn} f(u)\,\Psi_i(B_E,\D u)\\
&&\qquad = \frac{\omega_i \kappa_{n-i-1}}{(i+1)\omega_{n-i}} h^i + O(h^{i+2}).
\end{eqnarray*}

Up to now, the vector $e\in \Sph_E$ was fixed, and the dependence of the function $\varphi$ on $e$ and of the function
$f$ on $e$ and $h$ was not emphasized. Now we write $\varphi=\varphi_e$ and $f=f_{e,h}$ and vary the vector $e$. We note that $h$ is the geodesic radius of the spherical cap  $B_E\cap H^+(e,\cos h)$, where $H^+(e,\tau):=\{z\in\R^n: z\cdot e \ge \tau\}$. Therefore, for given $h\in (0,\pi/2)$, we can choose $N=N_i(h)$ vectors $e_1, \dots,e_N \in \Sph_E$ with the property that the caps $B_E\cap H^+(e_j,\cos h)$, $j=1,\ldots,N$, are mutually disjoint and that $c_2\le N_i(h)h^i\le c_3$ with positive constants $c_2,c_3$, independent of $h$.

We define the function $f_h:\Sn\to[0,1]$ by $f_h=f_{e_j,h}$ on the image of $[0,\pi/2]\times [0,h] \times 
(\Sph_E \cap e_j)\times( E^\perp\cap \Sn)$ under  $\varphi_{e_j}$, for $j=1,\ldots,N$, and as zero otherwise. It is easy to see that $\|f_h\|_L\le c_4/h$ with a constant $c_4$. Further, let 
$$
B_E(h):=B_E\cap\bigcap_{j=1}^{N_i(h)}H^- (e_j,\cos h).
$$
Then we obtain 
\begin{align*}
&\int_{\Sn}f_h(u)\, \Psi_{i}(B_E(h),\D u)-\int_{\Sn}f_h(u)\, \Psi_{i}(B_E,\D u)\\
&\qquad =\sum_{j=1}^{N_i(h)}\left(\frac{\omega_{i}\kappa_{n-i-1}}{(i+1)\omega_{n-i}}h^{i}+O(h^{i+2})\right)\\
&\qquad =\frac{\omega_{i}\kappa_{n-i-1}}{(i+1)\omega_{n-i}}\,N_i(h)h^i+O(h^{2}).
\end{align*}
Since $\|f\|_L\le c_4/h$, we deduce that
$$
d_{bL}\left(\Psi_{i}(B_E(h),\cdot),\Psi_{i}(B_E,\cdot)\right)\ge c_5 h-c_6h^3,
$$
where $c_5,c_6$ are positive constants. Since clearly 
$$
d_H(B_E(h),B_E)\le 1-\cos h\le h^2,
$$
an estimate of the form
$$
d_{bL}\left(\Psi_{i}(B_E(h),\cdot),\Psi_{i}(B_E,\cdot)\right)\le c \,d_H(B_E(h),B_E)^\alpha,
$$
with some constant $c>0$ and arbitrarily small $h>0$, requires that $\alpha\le 1/2$.

So far, we have assumed that $i\le n-2$. The proof for $i=n-1$ follows the same lines, but is considerably simpler, since $E=\R^n$ in this case, and no dependence on the variable $s$ occurs (or, formally, we put $s=0$).


\end{document}